\numberwithin{equation}{section}
\newtheorem{thm}{Theorem}[section]
\newtheorem{prop}[thm]{Proposition}
\theoremstyle{definition}
\theoremstyle{definition}
\theoremstyle{definition}
\newtheorem{defn}[thm]{Definition}
\newcommand{\be}{\begin{eqnarray}}
\newcommand{\ee}{\end{eqnarray}}
\newcommand{\comment}[1]{}
\begin{document}

\title{A remark on $\Lambda^2$-enlargeable manifolds}
 
\author{Guangxiang Su}

\address{Chern Institute of Mathematics \& LPMC, Nankai
University, Tianjin 300071, P.R. China}
\email{guangxiangsu@nankai.edu.cn}

\begin{abstract}
In this note, we consider the case where the condition ``{constant} near infinity" in the definition of $\Lambda^2$-enlargeable manifolds is replaced by the condition ``{locally constant} near infinity" and prove that a $\Lambda^2$-enlargeable manifold in this modified sense still cannot carry a complete Riemannian metric of positive scalar curvature. As a consequence, we give another proof of Wang-Zhang's theorem on the generalized Geroch conjecture for complete spin manifolds.
\end{abstract}

\maketitle

\section{Introduction}

The $\Lambda^2$-enlargeable manifold was introduced by Gromov-Lawson (\cite{GL83}). A famous theorem of Gromov-Lawson (\cite{GL83}) states that a $\Lambda^2$-enlargeable manifold cannot carry a complete Riemannian metric of positive scalar curvature.

Let $W$ be a closed $\Lambda^2$-enlargeable manifold and $M$ be a noncompact connected spin manifold without boundary with ${\rm dim}M={\rm dim}W$. Wang and Zhang (\cite{WZ}) proved that the connected sum $M\#W$ cannot carry a complete Riemannian metric of positive scalar curvature using the result in \cite{Z20}. If $M$ is a closed spin manifold, then $M\# W$ is a closed $\Lambda^2$-enlargeable manifold (\cite{GL83}). So it is natural 
to ask whether $M\#W$ is a $\Lambda^2$-enlargeable manifold for the case that $M$ is a noncompact spin manifold. For a noncompact $M$, from the constructions in \cite{SZ,WZ} (which goes back to \cite{GL83}), one finds that the condition that the maps from the covering manifolds to the standard sphere are constant near infinity is not satisfied for $M\#W$. In fact, the maps are only locally constant near infinity. In this note, we consider the $\Lambda^2$-enlargeable manifold in this case and prove that the $\Lambda^2$-enlargeable manifold in the current sense also cannot carry a complete Riemannian metric of positive scalar curvature.

\begin{defn}[\cite{GL83}]
A $C^1$-map $\varphi:X\to Y$ between Riemannian manifolds is said to be $(\epsilon,\Lambda^2)$-contracting, if for all $x\in X$, the map $\varphi_*:\Lambda^2(T_x X)\to \Lambda^2(T_{\varphi(x)}Y)$ satisfies
$$|\varphi_*(V_x\wedge W_x)|\leq \epsilon |V_x\wedge W_x|$$
for any $V_x,W_x\in T_xX$.
\end{defn}

In the following definition of $\Lambda^2$-enlargeable Riemannian metrics, we replace ``constant near infinity" in \cite[Definition 7.1]{GL83} by ``locally constant near infinity". In \cite[Definition 1.10]{S}, Shi also considered this condition, but from a different motivation, also, there is no overlap between Shi's main results and the results proved in the present paper. 

\begin{defn}\label{df1.2}
Let $M$ be a connected manifold without boundary. A Riemannian metric on $M$ is called $\Lambda^2$-enlargeable if given any $\epsilon>0$, there exist a covering manifold $M_\epsilon\to M$ such that $M_\epsilon$ is spin and a smooth map $f_\epsilon:M_\epsilon\to S^{{\rm dim}M}(1)$ which is $(\epsilon,\Lambda^2)$-contracting with respect to the lifted metric, {\bf locally constant} near infinity and of non-zero degree. A connected (not necessarily compact) manifold is said to be $\Lambda^2$-enlargeable if all the Riemannian metrics (not necessarily complete) on $M$ are $\Lambda^2$-enlargeable. 
\end{defn}

\begin{prop}\label{l1.4}
Let $W$ be a closed $\Lambda^2$-enlargeable manifold and $M$ be a noncompact connected spin manifold without boundary with ${\rm dim}M={\rm dim}W$, then $M\# W$ is a $\Lambda^2$-enlargeable manifold in the sense of Definition \ref{df1.2}.
\end{prop}

From Proposition \ref{l1.4} one finds that there are manifolds which are $\Lambda^2$-enlargeable in the sense of Definition \ref{df1.2} but not $\Lambda^2$-enlargeable in the sense of \cite[Definition 7.1]{GL83}. For example, since the torus $T^n$ is a closed $\Lambda^2$-enlargeable manifold, then $M\# T^n$ is a $\Lambda^2$-enlargeable manifold in the current sense for a noncompact spin manifold $M$.

The main theorem of this paper can be stated as follows, which extends \cite[Theorem 6.12]{GL83} to the current case.

\begin{thm}\label{th1.3}
A manifold $M$ without boundary which is $\Lambda^2$-enlargeable in the sense of Definition \ref{df1.2}, cannot carry a complete Riemannian metric of positive scalar curvature. 
\end{thm}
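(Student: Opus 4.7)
The plan is to run Gromov--Lawson's Dirac operator argument for $\Lambda^2$-enlargeable manifolds (\cite{GL83}), replacing the appeal to the ``constant near infinity'' hypothesis by the relative/Callias index framework of Zhang \cite{Z20}, which was already invoked in \cite{WZ}. Assume for contradiction that $M$ admits a complete Riemannian metric $g$ with scalar curvature $\kappa_g>0$. Fix once and for all a Hermitian bundle with unitary connection $(E,\nabla^E)\to S^n$, where $n=\dim M$, whose Chern character pairs nontrivially with $\widehat A(TS^n)$ (for example the Bott generator), so that the Dirac operator on $S^n$ twisted by $E$ has nonzero index. For each $\epsilon>0$, apply Definition \ref{df1.2} to obtain a spin cover $\pi_\epsilon\colon M_\epsilon\to M$, with lifted complete metric $\tilde g$ (still of positive scalar curvature $\tilde\kappa_g$), together with a smooth map $f_\epsilon\colon M_\epsilon\to S^n$ which is $(\epsilon,\Lambda^2)$-contracting, of nonzero degree, and locally constant outside a compact set $\tilde K_\epsilon\subset M_\epsilon$.

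Next I would pull back to form $E_\epsilon := f_\epsilon^{*}E$ with its pullback connection and consider the twisted spin Dirac operator $D^{E_\epsilon}$ on $M_\epsilon$. Two features of $E_\epsilon$ are crucial. First, since $f_\epsilon$ is $(\epsilon,\Lambda^2)$-contracting, the Weitzenb\"ock curvature endomorphism satisfies $|\mathcal R^{E_\epsilon}|\leq C\epsilon^2$ pointwise, with $C$ depending only on $(E,\nabla^E)$. Second, because $f_\epsilon$ is locally constant outside $\tilde K_\epsilon$, on each connected component of $M_\epsilon\setminus \tilde K_\epsilon$ the pullback bundle $E_\epsilon$ is canonically the trivial bundle with trivial connection. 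After a choice of flat trivialization on each component, $D^{E_\epsilon}$ agrees outside $\tilde K_\epsilon$ with the Dirac operator $D^{\mathbb{C}^r}$ twisted by the trivial flat bundle of rank $r=\rank E$. The Gromov--Lawson relative index theorem, in the noncompact incarnation of \cite{Z20}, then yields a well defined relative index $\Ind(D^{E_\epsilon})-\Ind(D^{\mathbb{C}^r})$; a standard Atiyah--Singer style computation identifies it with $\deg(f_\epsilon)\int_{S^n}\ch(E)\,\widehat A(TS^n)\neq 0$.

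On the analytic side, the Lichnerowicz formula
\begin{equation*}
(D^{E_\epsilon})^2 = \nabla^{*}\nabla + \frac{\tilde\kappa_g}{4} + \mathcal R^{E_\epsilon},
\end{equation*}
combined with pointwise positivity of $\tilde\kappa_g$, smallness of $\mathcal R^{E_\epsilon}$ on $\tilde K_\epsilon$, and its vanishing outside, should force both $L^2$-indices to vanish once $\epsilon$ is small, yielding the desired contradiction with the previous paragraph. This last step is the main obstacle. Since $\tilde\kappa_g$ has no uniform positive lower bound and the set $\tilde K_\epsilon$ where $\mathcal R^{E_\epsilon}$ lives is not a priori bounded independently of $\epsilon$, one cannot naively arrange $\tilde\kappa_g/4+\mathcal R^{E_\epsilon}>0$ everywhere. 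The remedy, following \cite{Z20}, is to deform the twisted Dirac operator by a Callias-type potential $\Psi$ supported where the scalar-curvature control degenerates; this makes the deformed operator Fredholm and isolates the topological contribution, reducing the positivity requirement to a region where it can be secured by shrinking $\epsilon$. The genuinely new point in the present setting is to check that the deformation scheme of \cite{Z20} still goes through when the twist bundle is only locally trivial at infinity---which is precisely what the switch from ``constant'' to ``locally constant'' in Definition \ref{df1.2} is designed to accommodate.
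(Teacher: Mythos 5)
Your proposal correctly identifies the overall Gromov--Lawson strategy, but it is missing the one idea that actually makes the argument close, and you have (honestly) flagged the resulting hole yourself without repairing it. The paper resolves the difficulty you call ``the main obstacle'' by the conformal rescaling trick of \cite[Theorem 6.12]{GL83}: since Definition \ref{df1.2} requires \emph{every} metric on $M$ to be $\Lambda^2$-enlargeable, one applies the definition not to $g^{TM}$ but to the auxiliary metric $k^{TM}g^{TM}$. The map $f_\epsilon$ is then $(\epsilon,\Lambda^2)$-contracting with respect to $k^{TM_\epsilon}g^{TM_\epsilon}$, which converts the Weitzenb\"ock error term into a bound of the form $\epsilon\, k^{TM_\epsilon}\cdot C_1 n(n-1)/2$ (see (\ref{tt2.7})--(\ref{t2.8})): the curvature of the pullback bundle is dominated by $\epsilon$ times the scalar curvature itself, pointwise, and is therefore absorbed by the Lichnerowicz term $k/4$ for the fixed choice $\epsilon = 1/(4C_1(n+1)^2)$, with no uniform lower bound on $k$ needed. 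In your version the contraction is measured against the lifted metric $\tilde g$, so the error is only $O(\epsilon)$ in absolute terms; since $\tilde K_\epsilon$ is not bounded independently of $\epsilon$ and $\tilde\kappa_g$ may decay along the supports of ${\rm d}f_\epsilon$, shrinking $\epsilon$ does not secure positivity. The Callias-type potential you invoke does not repair this: such a potential is supported (and effective) off the support of ${\rm d}f_\epsilon$, i.e.\ where the twist is already flat, whereas the positivity failure occurs precisely on ${\rm Supp}({\rm d}f_\epsilon)$. (A small additional slip: $(\epsilon,\Lambda^2)$-contraction gives $|\mathcal R^{E_\epsilon}|\leq C\epsilon$, not $C\epsilon^2$, since the curvature is a $2$-form.)

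Beyond this gap, your route also diverges structurally from the paper's: you propose to work on the noncompact cover with a relative/Callias index theorem in the style of \cite{Z20}, whereas the paper compactifies, forming a closed manifold $\widehat{M}_{H_{\epsilon,3m}}$ as in \cite{LSWZ}, deforms the twisted Dirac operator by $\varepsilon f^*_{\epsilon,l}V/\beta$ with $(f^*_{\epsilon,l}V)^2\geq\delta$ off ${\rm Supp}({\rm d}f_\epsilon)$ (this is how the ``locally constant near infinity'' hypothesis is exploited), and contradicts the closed Atiyah--Singer index theorem. Your treatment of the locally constant region via flat trivializations on each end component is a reasonable alternative in principle, but as written the proposal does not constitute a proof until the rescaling step is incorporated.
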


By Proposition \ref{l1.4} and Theorem \ref{th1.3}, we give another proof of \cite[Theorem 1.1]{WZ}, which states that $M\# W$ cannot carry a complete Riemannian metric of positive scalar curvature.

The rest of this paper is organized as follows. In Section 2, we will give a proof of Proposition \ref{l1.4}. In Section 3, we will give a proof of Theorem \ref{th1.3}.

\section{Proof of Proposition \ref{l1.4}}

Let $g^{TW}_0$ be a fixed Riemannian metric on $TW$. We fix a point $p\in W$. For any $r\geq 0$, let $B^W_{p}(r)=\{y\in W: d(p,y)\leq r\}$. As in \cite{SZ,WZ}, let $b_0$ be a fixed sufficiently small number. Then the connected sum $M\# W$ can be constructed so that the hypersurface $\partial B^W_p(b_0)$, which is the boundary of $B^W_p(b_0)$, cuts $M\# W$ into two parts: one part $W\setminus B^W_p(b_0)$ and the other part coming from $M$ (by attaching the boundary of a ball in $M$ to $\partial B^W_p(b_0)$). Let $g_0^{T(W\setminus B^W_p(b_0))}$ be the restricted metric on $T(W\setminus B^W_p(b_0))$ from $g^{TW}_0$.

Let $g^{T(M\# W)}$ be an arbitrary Riemannian metric on $T(M\# W)$. From the metric $g^{T(M\# W)}$, by restriction we get a metric $g^{T(W\setminus B^W_p(b_0))}$ on $T(W\setminus B^W_p(b_0))$. For  any $\epsilon>0$, let $\pi: \widehat{W}_\epsilon\to W$ be a covering manifold satisfying \cite[Definition 7.1]{GL83}, carrying the lifted geometric data of $(W, g^{TW}_0)$. Especially there exist a smooth map $f:\widehat{W}_\epsilon\to S^{{\rm dim}M}(1)$ and a compact subset $K_\epsilon\subset \widehat{W}_\epsilon$ such that $f$ is constant on $\widehat{W}_\epsilon\setminus K_\epsilon$. As in \cite{SZ, WZ}, the connected sum $M\# W$ is lifted naturally to $\widehat{W}_\epsilon$ and we denote the resulting manifold by $\widehat{M}\# \widehat{W}_\epsilon$. We lift the metric $g^{T(M\# W)}$ to $\widehat{M}\# \widehat{W}_\epsilon$ and extend the map $f$ to $\widehat{M}\# \widehat{W}_\epsilon$ by setting that $f|_{\widehat{M}\#(\widehat{W}_\epsilon\setminus K_\epsilon)}$ is constant. Since $\overline{W\setminus B^W_p(b_0)}$ is compact, the metrics $g^{T(W\setminus B^W_p(b_0))}_0$ and $g^{T(W\setminus B^W_p(b_0))}$ are equivalent. Then by the construction in \cite{WZ}, we have a map $\widehat{f}:\widehat{M}\#\widehat{W}_\epsilon\to S^{{\rm dim}M}(1)$ which is $(c\epsilon,\Lambda^2)$-contracting with respect to the lifted metric of $g^{T(M\# W)}$ for some constant $c>0$, locally constant near infinity and of non-zero degree. Then the metric $g^{T(M\# W)}$ is a $\Lambda^2$-enlargeable metric in the sense of Definition \ref{df1.2} and by definition $M\# W$ is a $\Lambda^2$-enlargeable manifold in the sense of Definition \ref{df1.2}.

\section{Proof of Theorem \ref{th1.3}}
In this section we give a proof of Theorem \ref{th1.3} using the methods in \cite{SWZ} and \cite{LSWZ}.

Let $g^{TM}$ be a complete Riemannian metric on $TM$ and $k^{TM}$ be the associated scalar curvature. We argue by contradiction. Assume that 
$$k^{TM}>0\ \ {\rm over}\ \ M.$$

Following the proof of \cite[Theorem 6.12]{GL83}, we consider another metric on $TM$ defined by $k^{TM}g^{TM}$. By definition, for the metric $k^{TM}g^{TM}$ and any $\epsilon>0$, there exists a covering 
$$\pi_\epsilon: M_\epsilon\to M$$
such that $M_\epsilon$ is spin and there exists a smooth map
$$f_\epsilon: M_\epsilon\to S^{{\rm dim}M}(1)$$
which is $(\epsilon,\Lambda^2)$-contracting for the lifted metric of $k^{TM}g^{TM}$, {\bf locally constant} outside a compact subset $K_\epsilon$ and of non-zero degree.

Let $g^{TM_\epsilon}=\pi^*_\epsilon g^{TM}$ be the lifted metric of $g^{TM}$ and $k^{TM_\epsilon}=\pi^*(k^{TM})$. Set ${\rm dim}M=n$.

\subsection{Construct a closed manifold}
In this subsection, we briefly recall from \cite{SWZ} and \cite{LSWZ} the construction of a closed manifold from $(M_\epsilon, g^{TM_\epsilon})$.

Following \cite[Theorem 1.17]{GL83}, we choose a fixed point $x_0\in M_\epsilon$ and let $d:M_\epsilon\to \mathbb{R}^+$ be a regularization of the distance function ${\rm dist}(x,x_0)$ such that 
{
  \begin{equation}
    \label{eq:fun-d}
  |\nabla d|(x)\leq \frac{3}{2},
\end{equation}
for any $x\in M_\epsilon$.}
Set
\begin{equation}\label{728}
  B_{\epsilon,m}=\{x\in M_\epsilon: d(x)\leq m\},\ m\in \mathbb{N}.
\end{equation}

Since $K_\epsilon$ is compact, we can choose a sufficiently large $m$ such that $K_\epsilon\subseteq B_{\epsilon,m}$. This implies
\begin{equation}
  \label{eq:df-b}
  {\rm Supp}({\rm d} f_\epsilon) \subseteq K_\epsilon \subseteq B_{\epsilon,m}.
\end{equation}

Following \cite{GL83}, we take a compact hypersurface $H_{\epsilon,3m}\subseteq M_\epsilon\setminus K_\epsilon$, cutting $M_\epsilon$ into two parts such that the compact part, denoted by $M_{H_{\epsilon,3m}}$, contains $B_{\epsilon,3m}$. Then $M_{H_{\epsilon,3m}}$ is a compact smooth manifold with boundary $H_{\epsilon,3m}$. Note that the number of connected components of $M_\epsilon\setminus M_{H_{\epsilon,3m}}$ is finite. Let $\{Y_j\}_{j=1}^{l}$ be the connected components of $M_\epsilon\setminus M_{H_{\epsilon,3m}}$. 

Let $H_{\epsilon,3m}\times [-1,2]$ be the product manifold and we construct a metric $H_{\epsilon,3m}\times [-1,2]$ as in \cite{SWZ}.

Assume $f(Y_j)=p_j \in S^{n}(1)$, $j= 1,\dots, l$. We choose a point $p_0\in S^{n}(1)$ and for $j= 1,\dots,l$, pick a curve $\xi_j(\tau), 0\leq \tau\leq 1$, connecting $p_j$ and $p_0$ such that $\xi_i(\tau)\cap \xi_j(\tau')=\emptyset, 0<\tau, \tau'<1, i\neq j$. Following \cite{SWZ}, for $(y,\tau)\in H_{\epsilon,3m}\times [-1,2]$ and $j=1,\dots,l$, we define\footnote{A similar trick also appears in \cite{CZ}.}
\begin{align}\label{r3.4}
f_\epsilon(y,\tau)=
\begin{cases}
p_j,\ (y,\tau)\in (Y_j\cap H_{\epsilon,3m})\times [-1,0],\\
\xi_j(\tau),\ (y,\tau)\in (Y_j\cap H_{\epsilon,3m})\times [0,1],\\
p_0,\ (y,\tau)\in (Y_j\cap H_{\epsilon,3m})\times [1,2].
\end{cases}
\end{align}
Note that some points of $\{p_j\}_{j=1}^l$ may coincide. Without loss of generality, we assume that  $(0,\cdots,0,\pm 1)\notin \xi_j(\tau), 1\leq j\leq l$. The map $f_\epsilon$ can be extended to a map on $M_{H_{\epsilon,3m}}\cup (H_{\epsilon,3m}\times [-1,2])$ via $f_\epsilon(y,\tau)$. Denote such a map on $M_{H_{\epsilon,3m}}\cup (H_{\epsilon,3m}\times [-1,2])$ by $f_{\epsilon,l}$.

Let $M'_{H_{\epsilon,3m}}$ be another copy of $M_{H_{\epsilon,3m}}$ with the same metric and the opposite orientation. As in \cite{SWZ}, we can glue $M_{H_{\epsilon,3m}}$, $H_{\epsilon,3m}\times [-1,2]$ and $M'_{H_{\epsilon,3m}}$ together to get a closed manifold $\widehat{M}_{H_{\epsilon,3m}}$.
We view $M_{H_{\epsilon,3m}}$, $M'_{H_{\epsilon,3m}}$ and $H_{\epsilon,3m}\times [-1,2]$ as submanifolds of $\widehat{M}_{H_{\epsilon,3m}}$. The map $f_{\epsilon,l}$ can be extended to $\widehat{M}_{H_{\epsilon,3m}}$ by setting $f_{\epsilon,l}(M'_{H_{\epsilon,3m}})={p_0}$.
We still denote the map on $\widehat{M}_{H_{\epsilon,3m}}$ by $f_{\epsilon,l}$. The map $f_{\epsilon,l}$ has the following properties: 
\begin{align}\label{6/3}{\rm Supp}({\rm d}f_{\epsilon,l})\subseteq {\rm Supp}({\rm d}f_\epsilon)\cup (H_{\epsilon,3m}\times [0,1]),\ {\rm deg}(f_{\epsilon,l})={\rm deg}(f_\epsilon)\neq 0.
\end{align}

For any $\beta>0$, let $g^{TM_\epsilon}_{\beta}$ be the Riemannian metric on $M_\epsilon$ defined by
\begin{align}\label{1}
g^{TM_\epsilon}_{\beta}=\beta^2g^{TM_\epsilon}.
\end{align}

Let $g^{TH_{\epsilon,3m}}$ be the induced metric on $H_{\epsilon,3m}$ by (\ref{1}) with $\beta=1$ and ${\rm d}t^2$ be the standard metric on $[0,1]$. By the construction of $\widehat{M}_{H_{\epsilon,3m}}$, we can define a smooth metric $g_{\beta}^{T\widehat{M}_{H_{\epsilon,3m}}}$ on $\widehat{M}_{H_{\epsilon,3m}}$ in the following way:
\begin{align}\label{r3.6}
  \left . g_{\beta}^{T\widehat{M}_{H_{\epsilon,3m}}}\right|_{M_{H_{\epsilon,3m}}} = g^{TM_\epsilon}_{\beta}, \ 
   \left . g_{\beta}^{T\widehat{M}_{H_{\epsilon,3m}}}\right|_{M'_{H_{\epsilon,3m}}} = g^{TM_{H_{\epsilon,3m}}'}, \ 
 \left .   g_{\beta}^{T\widehat{M}_{H_{\epsilon,3m}}}\right|_{H_{\epsilon,3m}\times [0,1]} = g^{TH_{\epsilon,3m}}\oplus {{\rm d} t^2},
   \end{align}
and then paste these metrics together. Let $\nabla_\beta^{T\widehat{M}_{H_{\epsilon,3m}}}$ be the Levi-Civita connection on $T\widehat{M}_{H_{\epsilon,3m}}$ associated with the metric $g_{\beta}^{T\widehat{M}_{H_{\epsilon,3m}}}$.

\subsection{The even-dimensional case}

In this subsection, we assume $n$ is even.
Let $S_{\beta}(T\widehat{M}_{H_{\epsilon,3m}})=S_{\beta,+}(T\widehat{M}_{H_{\epsilon,3m}})\oplus S_{\beta,-}(T\widehat{M}_{H_{\epsilon,3m}})$ be the $\mathbb{Z}_2$-graded Hermitian vector bundle of spinors associated with $(T\widehat{M}_{H_{\epsilon,3m}},g_{\beta}^{T\widehat{M}_{H_{\epsilon,3m}}})$, carrying the canonical induced Hermitian connection $\nabla^{S_{\beta}(T\widehat{M}_{H_{\epsilon,3m}})}=\nabla^{S_{\beta,+}(T\widehat{M}_{H_{\epsilon,3m}})}\oplus \nabla^{S_{\beta,-}(T\widehat{M}_{H_{\epsilon,3m}})}$.

Let $S(TS^n(1))=S_{+}(TS^n(1))\oplus S_{-}(TS^n(1))$ be the spinor bundle of $S^n(1)$. Following~\cite[(2.6)]{Z20}, we construct a suitable bundle endomorphism $V$ of $S(TS^n(1))$.
More precisely, by taking any regular value $\mathfrak{q}\in S^n(1)\setminus f_\epsilon(M_\epsilon\setminus K_\epsilon)$ of $f_\epsilon$, we choose $X$ to be a smooth vector field on $S^n(1)$ such that $|X| > 0$ on $S^n(1)\setminus \{\mathfrak{q}\}$.
Let
\begin{equation*}
  v= c(X): S_+(TS^n(1))\to S_-(TS^n(1))
\end{equation*}
be the Clifford action of $X$ and
\begin{equation*}
  v^*: S_-(TS^n(1))\to S_+(TS^n(1))
\end{equation*}
be the adjoint of $v$ with respect to the Hermitian metric on $S_{\pm}(TS^n(1))$.
We define $V$ to be the self-adjoint odd endomorphism $$V=v+v^*:S(TS^n(1))\to S(TS^n(1)).$$
Then there exists $\delta>0$ such that
\begin{align}\label{t2.1}
(f^*_{\epsilon,l}V)^2\geq \delta\ \ {\rm on}\ \ \widehat{M}_{H_{\epsilon,3m}}\setminus {\rm Supp}({\rm d}f_{\epsilon}).
\end{align}

Let 
\begin{equation}
  \left(E_{3m,\pm},g^{E_{3m,\pm}},\nabla^{E_{3m,\pm}}\right)= f_{\epsilon,l}^*\left (S_{\pm}(TS^n(1)),g^{S_{\pm}(TS^n(1))},\nabla^{S_{\pm}(TS^n(1))}\right)
\end{equation}
be the induced Hermitian vector bundle with the Hermitian connection on $\widehat M_{H_{\epsilon,3m}}$. Then $E_{3m}=E_{3m,+}\oplus E_{3m,-}$ is a ${\mathbb{Z}}_2$-graded Hermitian vector bundle over $\widehat M_{H_{\epsilon,3m}}$.

Let $\nabla^{S_{\beta}(T\widehat{M}_{H_{\epsilon,3m}})\widehat{\otimes} E_{3m}}$ be the connection
on $S_{\beta}(T\widehat{M}_{H_{\epsilon,3m}})\widehat{\otimes} E_{3m}$
 induced by $\nabla^{S_{\beta}(T\widehat{M}_{H_{\epsilon,3m}})}$ and $\nabla^{E_{3m,\pm}}$.

 Let $D^{E_{3m}}_{\beta}$ acting on $S_{\beta}(T\widehat{M}_{H_{\epsilon,3m}})\widehat{\otimes} E_{3m}$ be the twisted Dirac operator defined by
\begin{align}\label{y3.6}
D^{E_{3m}}_{\beta}=\sum_{i=1}^{n}c_{\beta}(h_i)\nabla_{h_i}^{S_{\beta}(T\widehat{M}_{H_{\epsilon,3m}})\widehat{\otimes} E_{3m}},
\end{align}
where $\{h_1,\cdots,h_{n}\}$ is a local oriented orthonormal basis of $(T\widehat{M}_{H_{\epsilon,3m}},g_{\beta}^{T\widehat{M}_{H_{\epsilon,3m}}})$, and $c_\beta(\cdot)$ means that the Clifford action is with respect to the metric $g^{T\widehat{M}_{H_{\epsilon,3m}}}_\beta$.

For $\varepsilon>0$, we introduce the following deformation of $D^{E_{3m}}_{\beta}$ on $\widehat M_{H_{\epsilon,3m}}$,
\begin{align}
D^{E_{3m}}_{\beta}+{{\varepsilon f^*_{\epsilon,l}V}\over{\beta}},
\end{align}
and let 
\begin{multline}
\left(D^{E_{3m}}_{\beta}+{{\varepsilon f^*_{\epsilon,l}V}\over{\beta}}\right)_{+}:\Gamma\left(S_{\beta,+}\left(T\widehat{M}_{H_{\epsilon,3m}}\right){\otimes} E_{3m,+}\oplus S_{\beta,-}\left(T\widehat{M}_{H_{\epsilon,3m}}\right){\otimes} E_{3m,-}\right)\\
\to \Gamma\left(S_{\beta,-}\left(T\widehat{M}_{H_{\epsilon,3m}}\right){\otimes} E_{3m,+}\oplus S_{\beta,+}\left(T\widehat{M}_{H_{\epsilon,3m}}\right){\otimes} E_{3m,-}\right)
\end{multline}
be the natural restriction.

By the Lichnerowicz formula, we have
\begin{multline}\label{t2.6}
\left(D^{E_{3m}}_{\beta}+{{\varepsilon f^*_{\epsilon,l}V}\over{\beta}}\right)^2=\left(D^{E_{3m}}_{\beta}\right)^2+\left[D^{E_{3m}}_{\beta},{{\varepsilon f^*_{\epsilon,l}V}\over{\beta}}\right]+\left({{\varepsilon f^*_{\epsilon,l}V}\over{\beta}}\right)^2\\
=-\Delta^{E_{3m},\beta}+{{k^{T\widehat{M}_{H_{\epsilon,3m}}}}\over{4}}+{1\over 2}\sum_{i,j=1}^{n}R^{E_{3m}}(h_i,h_j)c_\beta(h_i)c_\beta(h_j)+\left[D^{E_{3m}}_{\beta},{{\varepsilon f^*_{\epsilon,l}V}\over{\beta}}\right]+\left({{\varepsilon f^*_{\epsilon,l}V}\over{\beta}}\right)^2,
\end{multline}
where $-\Delta^{E_{3m},\beta}\geq 0$ is the corresponding Bochner Laplacian, $k^{T\widehat{M}_{H_{\epsilon,3m}}}$ is the scalar curvature of $g_{\beta}^{T\widehat{M}_{H_{\epsilon,3m}}}$ and
$$R^{E_{3m}}=\left(\nabla^{E_{3m,+}}\right)^2+\left(\nabla^{E_{3m,-}}\right)^2.$$

On $TM_{H_{\epsilon,3m}}$, by the $(\epsilon,\Lambda^2)$-contracting property of $f_\epsilon$ for the metric $k^{TM_\epsilon}g^{TM_\epsilon}$, we have
\begin{align}\label{tt2.7}
|f_{\epsilon,l,*}(h_i\wedge h_j)|={1\over \beta^2}|f_{\epsilon,l,*}(\beta h_i\wedge \beta h_j)|\leq {\epsilon\over {\beta^2}}|\beta h_i\wedge \beta h_j|_{k^{TM_\epsilon}g^{TM_\epsilon}}={{\epsilon k^{TM_\epsilon}}\over{\beta^2}}, \ \ i\neq j.
\end{align}

Let $\nabla^{S(TS^{n}(1))}$ be the canonical connection on the spinor bundle of $S^{n}(1)$. Let $R^{S(TS^{n}(1))}$ be the curvature tensor of the connection. Set
\begin{align}\label{cc4.5}
C_1=\sup_{p\in S^{n}(1)}\Big|R_p^{S\left(TS^{n}(1)\right)}\Big|.
\end{align}

For $x\in {\rm Supp}({\rm d}f_\epsilon)$ and $s\in\Gamma(\widehat{M}_{H_{\epsilon,3m}}, S_{\beta}(T\widehat{M}_{H_{\epsilon,3m}})\widehat{\otimes} E_{3m})$, by (\ref{tt2.7}), we have
\begin{multline}\label{t2.8}
\left|\left({1\over 2}\sum_{i,j=1}^{n}R^{E_{3m}}(h_i,h_j)c_\beta(h_i)c_\beta(h_j)s,s\right)(x)\right|\\
=\left|\left({1\over 2}\sum_{i,j}^{n}f^*_{\epsilon,l}(R^S(f_{\epsilon,l,*}(h_i\wedge h_j)))c_\beta(h_i)c_\beta(h_j)s,s\right)(x)\right|
\leq {{\epsilon k^{TM_\epsilon}}\over{2\beta^2}} n(n-1)C_1|s|^2(x),
\end{multline}
where $R^S$ is the shorthand for $R^{S(TS^n(1))}$.

Now, we choose 
\begin{align}\label{n2.10}
\epsilon={1\over{4C_1(n+1)^2}}.
\end{align}
Then $f_\epsilon$ is fixed and ${\rm Supp}({\rm d}f_\epsilon)$ is a fixed compact set. Hence, we can find $\kappa>0$ such that
\begin{align}\label{t2.10}
k^{TM_\epsilon}\geq \kappa\ \ {\rm on}\ \ {\rm Supp}({\rm d}f_\epsilon).
\end{align}

On $\widehat{M}_{H_{\epsilon,3m}}\setminus (({\rm Supp}({\rm d}f_\epsilon))\cup (H_{\epsilon,3m}\times [0,1]))$, we have
\begin{align}\label{nn2.12}
\left[D^{E_{3m}}_{\beta},{{\varepsilon f^*_{\epsilon,l}V}\over{\beta}}\right]=0,
\end{align}
and on $H_{\epsilon,3m}\times [0,1]$, we have
\begin{align}\label{nn2.13}
\left[D^{E_{3m}}_{\beta},{{\varepsilon f^*_{\epsilon,l}V}\over{\beta}}\right]=O_{m,\epsilon}\left({\varepsilon\over \beta}\right).
\end{align}

On ${{\rm Supp}({\rm d}f_\epsilon)}$, we have
\begin{align}\label{y2.14}
\left[D^{E_{3m}}_{\beta},{{\varepsilon f^*_{\epsilon,l}V}\over{\beta}}\right]=O_{\epsilon}\left({\varepsilon\over {\beta^2}}\right).
\end{align}

Following \cite[Theorem 1.17]{GL83}, let $\phi:[0, \infty) \rightarrow [0,1]$ be a smooth function such that $\phi \equiv 1$ on $[0, 1]$, $\phi \equiv 0$ on $[2, \infty)$ and $\phi' \approx -1$ on $[1, 2]$. We define a smooth function {${\psi_m}:M_{H_{\epsilon,3m}}\to [0,1]$ by
  \begin{equation}\label{6.6}
    \psi_m(x) = \phi\left(\frac{d(x)}{m}\right),
  \end{equation}
  where $m\in \mathbb{N}$. We extend $\psi_m$ to $(H_{\epsilon,3m}\times [-1,2])\cup M'_{H_{\epsilon,3m}}$ by setting
  \begin{equation*}
    \psi_m\big((H_{\epsilon,3m}\times [-1,2])\cup M'_{H_{\epsilon,3m}}\big)=0.
  \end{equation*}

Following~\cite[p. 115]{BL91}, let $\psi_{m,1},\, \psi_{m,2}: \widehat{M}_{H_{\epsilon,3m}}\rightarrow [0,1]$ be defined by
\begin{align}\label{0.20}
  \psi_{m,1} =\frac{\psi_{m}}{\big(\psi_{m}^2+(1-\psi_{m})^2\big)^{\frac{1}{2}}},\; \psi_{m,2} =\frac{1-\psi_{m}}{\big(\psi_{m}^2+(1-\psi_{m})^2\big)^{\frac{1}{2}}}.
\end{align}
Using the above definition and (\ref{eq:fun-d}), for $i=1,2$, we have
\begin{equation}
  \label{eq:est-psi}
  |\nabla \psi_{m,i}|(x)\leq {C/m} \text{ for any }x\in \widehat{M}_{H_{\epsilon,3m}},
\end{equation}
where $C$ is a constant independent of $g^{TM_\epsilon}$.

For any $s\in \Gamma(\widehat{M}_{H_{\epsilon,3m}},S_{\beta}(T\widehat{M}_{H_{\epsilon,3m}})\widehat{\otimes} E_{3m})$, by (\ref{0.20}), one has
\begin{multline}
\left\|\left(D^{E_{3m}}_{\beta}+{{\varepsilon f^*_{\epsilon,l}V}\over{\beta}}\right)s\right\|^2_\beta\\
=\left\|\psi_{m,1}\left(D^{E_{3m}}_{\beta}+{{\varepsilon f^*_{\epsilon,l}V}\over{\beta}}\right)s\right\|^2_\beta+\left\|\psi_{m,2}\left(D^{E_{3m}}_{\beta}+{{\varepsilon f^*_{\epsilon,l}V}\over{\beta}}\right)s\right\|^2_\beta,
\end{multline}
from which one gets
\begin{multline}\label{yy3.25}
\sqrt{2}\left\|\left(D^{E_{3m}}_{\beta}+{{\varepsilon f^*_{\epsilon,l}V}\over{\beta}}\right)s\right\|_\beta\\
\geq \left\|\psi_{m,1}\left(D^{E_{3m}}_{\beta}+{{\varepsilon f^*_{\epsilon,l}V}\over{\beta}}\right)s\right\|_\beta+\left\|\psi_{m,2}\left(D^{E_{3m}}_{\beta}+{{\varepsilon f^*_{\epsilon,l}V}\over{\beta}}\right)s\right\|_\beta\\
\geq \left\|\left(D^{E_{3m}}_{\beta}+{{\varepsilon f^*_{\epsilon,l}V}\over{\beta}}\right)(\psi_{m,1}s)\right\|_\beta+\left\|\left(D^{E_{3m}}_{\beta}+{{\varepsilon f^*_{\epsilon,l}V}\over{\beta}}\right)(\psi_{m,2}s)\right\|_\beta\\-\|c_\beta({\rm d}\psi_{m,1}s)\|_\beta-\|c_\beta({\rm d}\psi_{m,2}s)\|_\beta,
\end{multline}
where for each $i\in \{1,2\}$, we identify ${\rm d}\psi_{m,i}$ with the gradient of $\psi_{m,i}$.

For any $x\in M_{H_{\epsilon,3m}}$ and $i\in \{1,2\}$, as in \cite[(2.19)]{SWZ}, we have
\begin{align}
|c_\beta({\rm d}\psi_{m,i})s|_{\beta}(x)=O\left({1\over {\beta m}}\right)|s|_{\beta}(x).
\end{align}

Now we estimate the first term on the right-hand side of (\ref{yy3.25}). Using (\ref{t2.1}), (\ref{t2.6}), (\ref{t2.8})-(\ref{t2.10}) and (\ref{y2.14}), we have
\begin{multline}
\left\|\left(D^{E_{3m}}_{\beta}+{{\varepsilon f^*_{\epsilon,l}V}\over{\beta}}\right)(\psi_{m,1}s)\right\|^2_\beta\\ \geq  \left({{k^{TM_\epsilon}}\over{4\beta^2}}\psi_{m,1}s,\psi_{m,1}s\right)_\beta+\left({1\over 2}\sum_{i,j=1}^{n}R^{E_{3m}}(h_i,h_j)c_\beta(h_i)c_\beta(h_j)\psi_{m,1}s,\psi_{m,1}s\right)_\beta\\
+\left(\left[D^{E_{3m}}_{\beta},{{\varepsilon f^*_{\epsilon,l}V}\over{\beta}}\right]\psi_{m,1}s,\psi_{m,1}s\right)_\beta+\left(\left({{\varepsilon f^*_{\epsilon,l}V}\over{\beta}}\right)^2\psi_{m,1}s,\psi_{m,1}s\right)_\beta
\\
\geq {\kappa\over{8\beta^2}} \|s\|^2_{\beta,{\rm Supp}({\rm d}f_\epsilon)}+O_{\epsilon}\left({\varepsilon\over{\beta^2}}\right)\|s\|^2_{\beta,{\rm Supp}({\rm d}f_\epsilon)}+{{\delta\varepsilon^2}\over{\beta^2}}\|\psi_{m,1}s\|^2_{\beta,B_{2m}\setminus {\rm Supp}({\rm d}f_\epsilon)}.
\end{multline}

For the second term on the right-hand side of (\ref{yy3.25}), using (\ref{t2.1}) and (\ref{nn2.13}), we have
\begin{align}\label{yyy3.29}
\left\|\left(D^{E_{3m}}_{\beta}+{{\varepsilon f^*_{\epsilon,l}V}\over{\beta}}\right)(\psi_{m,2}s)\right\|^2_\beta
\geq {{\delta\varepsilon^2}\over{\beta^2}}\|\psi_{m,2}s\|^2_\beta+O_{m,\epsilon}\left({\varepsilon\over\beta}\right)\|\psi_{m,2}s\|^2_{\beta,H_{\epsilon,3m}\times [0,1]}.
\end{align}

Then using (\ref{yy3.25})-(\ref{yyy3.29}), one finds that there exist $c_0>0,\varepsilon>0,m>0$ such that when $\beta>0$ is small enough, 
\begin{align}\label{yy3.29}
\left\|\left(D^{E_{3m}}_{\beta}+{{\varepsilon f^*_{\epsilon,l}V}\over{\beta}}\right)s\right\|_\beta\geq {c_0\over\beta}\|s\|_\beta.
\end{align}

On the other hand, by the Atiyah-Singer index theorem \cite{AS} (cf. \cite[Proposition III. 13.8]{LaMi89}), as in \cite[(2.44)]{SWZ}, we have
\begin{multline}
{\rm ind}\left(\left(D^{E_{3m}}_{\beta}+{{\varepsilon f^*_{\epsilon,l}V}\over{\beta}}\right)_{+}\right)\\
= \left\langle \widehat{{A}}(TM_\epsilon) f_{\epsilon,l}^*\left({\rm ch}(S_+(TS^n(1)))-{\rm ch}(S_-(TS^n(1)))\right), [M_\epsilon] \right\rangle \\
    ={\rm deg}(f_{\epsilon,l}) \Big\langle {\rm ch}(S_+(TS^n(1)))-{\rm ch}(S_-(TS^n(1))),[S^n(1)]\Big\rangle \\
    = (-1)^{n\over 2}{\rm deg}(f_{\epsilon})\chi(S^n(1)) =2(-1)^{n\over 2}{\rm deg}(f_{\epsilon}) \neq 0.\\
\end{multline}
Then we get a contradiction. 

\subsection{The odd-dimensional case}
In this subsection, we assume $n$ is odd. We consider the composition $f_{\epsilon,l,r}$ of the maps
\begin{align}
\widehat{M}_{H_{\epsilon,3m}}\times S^1(r)\xrightarrow{f_{\epsilon,l}\times {1\over r}{\rm id}}S^n(1)\times S^1(1)\xrightarrow{\wedge}S^{n+1}(1),
\end{align}
where $S^1(r)$ is the round circle of radius $r$ with the canonical metric.

Fix $\epsilon$ as (\ref{n2.10}) and set
$$\kappa_0=\min \left\{k^{TM_\epsilon}(x):x\in {\rm Supp}({\rm d}f_\epsilon)\right\}.$$

We choose $r$ large enough such that 
 \begin{align}
{ {\sup\left\{|{{\rm d}}f_{\epsilon}|(x),x\in M_\epsilon\right\}}\over r}<\epsilon \kappa_0.
 \end{align}
Then by combining the method used in the above even-dimensional case and \cite[Section 3]{SWZ}, we can also get a contradiction.

One can also give a direct proof for odd-dimensional case using the argument in \cite{LSWZ}. We briefly outline it here.

Let $E$ be the $\mathbb{Z}_2$-graded vector bundle defined in \cite[p. 3]{LSWZ}. Then as (\ref{tt2.7})-(\ref{n2.10}), using \cite[Proposition 4.1]{LSWZ}, we choose and fix $\epsilon$. For simplicity we use the same notations as in \cite{LSWZ}.

Let $D^{\mathcal{E}_{3m},u}_\beta$, $0\leq u\leq 1$, be the family of twisted Dirac operators defined in \cite[(3.10)]{LSWZ}. Let $V:\Gamma(S_{\beta}(T\widehat{M}_{H_{3m}}){\otimes}\mathcal{E}_{3m})\to \Gamma(S_{\beta}(T\widehat{M}_{H_{3m}}){\otimes}\mathcal{E}_{3m})$ be the operator defined in \cite[p. 7]{LSWZ}. For any $\varepsilon>0$, let $D^{\mathcal{E}_{3m},u}_{\beta,\varepsilon}:\Gamma(S_{\beta}(T\widehat{M}_{H_{3m}}){\otimes}\mathcal{E}_{3m})\to \Gamma(S_{\beta}(T\widehat{M}_{H_{3m}}){\otimes}\mathcal{E}_{3m})$, $0\leq u\leq 1$, be the family of deformed twisted Dirac operators defined by
\begin{align}\label{2.5}
D^{\mathcal{E}_{3m},u}_{\beta,\varepsilon}=D^{\mathcal{E}_{3m},u}_{\beta}+{{\varepsilon V}\over{\beta}}.
\end{align}
Note that we do not need the cut-off function $\varphi$ (\cite[p. 7]{LSWZ}) here. Then proceeding as (\ref{t2.10})-(\ref{yyy3.29}), one finds that there exist $\varepsilon>0$, $m\in \mathbb{N}$ and $\beta>0$ such that the operator $D^{\mathcal{E}_{3m},u}_{\beta,\varepsilon}$ is invertible for any $u\in [0,1]$. On the other hand, by the same proof in \cite[Section 5]{LSWZ}, we can get ${\rm deg}(f_\epsilon)=0$, which contradicts the assumption that ${\rm deg}(f_\epsilon)\neq 0$.

$\ $

\noindent{\bf Acknowledgments.}
The author would like to thank Pengshuai Shi, Changliang Wang, Xiangsheng Wang and Prof. Weiping Zhang for helpful discussions. The author also would like to thank the referees for their helpful suggestions and comments. This work was partially supported by National Natural Science Foundation of China (Grant No. 12425106, Grant No. 12271266), the Fundamental Research Funds for the Central Universities 63253096, Natural Science Foundation of Tianjin, China (Grant No. 25JCZDJC01110) and Nankai Zhide Foundation.

\end{document}